\newtheoremstyle{ptheorem}{1em}{0em}{\itshape}{}{\bfseries}{.}{.5em}{}
\theoremstyle{ptheorem}
\newtheorem{thm}{Theorem}[section]
\newtheorem{lem}[thm]{Lemma}
\newtheorem{cor}[thm]{Corollary}
\theoremstyle{definition}
\newtheorem{dfn}{Definition}[section]
\theoremstyle{remark}
\newtheorem{exa}{Example}[section]
\newtheorem{rem}{Remark}[section]
\DeclareMathOperator{\Id}{Id}
\DeclareMathOperator{\dif}{d}
\newcommand{\cC}{{\mathcal C}}
\newcommand{\bN}{{\mathbb N}}
\newcommand{\bQ}{{\mathbb Q}}
\newcommand{\bR}{{\mathbb R}}
\newcommand{\bZ}{{\mathbb Z}}
\renewcommand{\a}{\alpha}
\renewcommand{\b}{\beta}
\renewcommand{\c}{\gamma}
\renewcommand{\l}{\lambda}
\newcommand{\e}{\epsilon}
\let\foo\phi
\let\phi\varphi
\let\varphi\foo
\newcommand{\ol}{\overline}
\renewcommand{\d}{\delta}
\renewcommand{\(}{\left(}
\renewcommand{\)}{\right)}
\renewcommand{\[}{\left[}
\renewcommand{\]}{\right]}
\newcommand{\til}{\tilde}
\begin{document}
\title{Periodic solutions for some phi-Laplacian and reflection equations\footnote{Partially supported by FEDER and Ministerio de Educaci\'on y Ciencia, Spain, project MTM2010-15314}}

\author{
Alberto Cabada \, and F. Adri\'an F. Tojo\footnote{Supported by  FPU scholarship, Ministerio de Educaci\'on, Cultura y Deporte, Spain.} \\
\normalsize
Departamento de An\'alise Ma\-te\-m\'a\-ti\-ca, Facultade de Matem\'aticas,\\ 
\normalsize Universidade de Santiago de Com\-pos\-te\-la, Spain.\\ 
\normalsize e-mail: alberto.cabada@usc.es, fernandoadrian.fernandez@usc.es}
\date{}

\maketitle

\begin{abstract}
This work is devoted to the study of the existence and periodicity of solutions of initial differential problems, paying special attention to the explicit computation of the period. These problems are also connected with some particular initial and boundary value problems with reflection, which allows us to prove existence of solutions of the latter using the existence of the first.
\end{abstract}

\noindent {\bf Keywords:}  Equations with involutions. Equations with reflection. $\varphi$-Laplacian.  Periodic solutions.
\section{Introduction}

The idea behind this paper appeared in another work of the authors \cite{Cab4} where the following lemmas were proved.
\begin{dfn}
If $A\subset\bR$, a function $\phi:A\to A$ such that $\phi\ne\Id$ and $\phi\circ\phi=\Id$ is called an \textbf{involution}.
\end{dfn}
Let us consider the problems
\begin{equation}\label{eqinv1}
x'(t)=f(x(\phi(t))), \quad x(c)=x_c
\end{equation}
and
\begin{equation}\label{ode1}
x''(t)=f'(f^{-1}(x'(t)))f(x(t))\phi'(t), \quad x(c)=x_c, \;x'(c)=f(x_c).
\end{equation}
\begin{lem}[{\cite[Lemma 2.1]{Cab4}}]\label{lem1b}
Let $(a,b)\subset\bR$ and let $f:\bR\to\bR$ be a diffeomorphism. Let $\phi\in\cC^1((a,b))$ be an involution. Let $c$ be a fixed point of $\phi$. Then $x$ is a solution of the first order differential equation with involution
(\ref{eqinv1}) if and only if $x$ is a solution of the second order ordinary differential equation
(\ref{ode1}).\end{lem}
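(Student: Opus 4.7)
The plan is to prove the two implications separately.

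\textbf{Forward direction} $(\ref{eqinv1}) \Rightarrow (\ref{ode1})$. I would differentiate $(\ref{eqinv1})$ by the chain rule to obtain $x''(t) = f'(x(\phi(t)))\, x'(\phi(t))\, \phi'(t)$. Evaluating $(\ref{eqinv1})$ at $\phi(t)$ and invoking $\phi \circ \phi = \Id$ yields $x'(\phi(t)) = f(x(\phi(\phi(t)))) = f(x(t))$, while $(\ref{eqinv1})$ itself gives $x(\phi(t)) = f^{-1}(x'(t))$. Substituting both into the expression for $x''(t)$ produces $(\ref{ode1})$. The initial conditions are immediate: $x(c) = x_c$ is shared, and $x'(c) = f(x(\phi(c))) = f(x_c)$ because $\phi(c) = c$.

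\textbf{Reverse direction} $(\ref{ode1}) \Rightarrow (\ref{eqinv1})$. The clean idea is to introduce the auxiliary function
\[
G(t) := f^{-1}(x'(\phi(t)))
\]
and show that $G$ satisfies the \emph{same} initial value problem for $(\ref{ode1})$ as $x$ does. Using the chain rule and $(f^{-1})'(y) = 1/f'(f^{-1}(y))$, substituting $(\ref{ode1})$ evaluated at $\phi(t)$ into the numerator, and using the involution identity $\phi'(t)\,\phi'(\phi(t)) = 1$ (obtained by differentiating $\phi \circ \phi = \Id$), a short computation gives $G'(t) = f(x(\phi(t)))$. Differentiating once more and noting the tautologies $f^{-1}(G'(t)) = x(\phi(t))$ and $f(G(t)) = x'(\phi(t))$, I would verify that $G''(t) = f'(f^{-1}(G'(t)))\, f(G(t))\, \phi'(t)$, so $G$ indeed solves $(\ref{ode1})$. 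The initial data at $c$ match those of $x$: $G(c) = f^{-1}(f(x_c)) = x_c$ and $G'(c) = f(x(\phi(c))) = f(x_c)$. Under the standard regularity assumption making the right-hand side of $(\ref{ode1})$ locally Lipschitz in $(x,x')$, uniqueness for the IVP forces $G \equiv x$, that is $f^{-1}(x'(\phi(t))) = x(t)$, equivalently $x'(\phi(t)) = f(x(t))$. Replacing $t$ by $\phi(t)$ and again using $\phi \circ \phi = \Id$ delivers $(\ref{eqinv1})$.

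The main conceptual step is the choice of $G$. A more naive attempt — defining $w(t) := f^{-1}(x'(t)) - x(\phi(t))$ and trying to deduce $w \equiv 0$ from $w(c) = 0$ — only produces the functional differential inequality $|w'(t)| \le L\,|\phi'(t)|\,|w(\phi(t))|$, which is not an ordinary ODE and would require a delicate Gronwall-type iteration localised near the fixed point $c$, plus a continuation argument to extend globally. Using $G = f^{-1}(x' \circ \phi)$ bypasses this entirely by turning the problem into a single invocation of ODE uniqueness.
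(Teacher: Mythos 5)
This paper does not actually prove Lemma \ref{lem1b}: it is quoted from \cite{Cab4} without proof, so there is no in-paper argument to compare against and your attempt has to be judged on its own terms. Your forward direction is the standard (and correct) computation. The reverse direction via $G=f^{-1}(x'\circ\phi)$ is a sound strategy, and your computations of $G'$, $G''$ and the initial data are right; the one step that is not a harmless hedge is the appeal to uniqueness for the initial value problem (\ref{ode1}). You invoke it ``under the standard regularity assumption making the right-hand side of (\ref{ode1}) locally Lipschitz in $(x,x')$'', but that assumption is not in the lemma: for a $\cC^1$-diffeomorphism $f$ the map $v\mapsto f'(f^{-1}(v))=1/(f^{-1})'(v)$ is continuous but need not be locally Lipschitz, so Picard--Lindel\"of does not apply to (\ref{ode1}) read as a scalar second-order equation. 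The point is not academic here: Remark \ref{remlemex} insists the lemma survives when $f$ is merely locally Lipschitz and absolutely continuous, in which case $f'$ exists only almost everywhere and scalar uniqueness for (\ref{ode1}) in the form you use it is unavailable.

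The gap is repairable without adding hypotheses, and the repair shortens your argument. As the Introduction of the paper observes, (\ref{ode1}) is equivalent to $(f^{-1}\circ x')'(t)=f(x(t))\,\phi'(t)$, i.e.\ to the planar system $u'=f(v)$, $v'=f(u)\,\phi'(t)$ under the substitution $u=x$, $v=f^{-1}\circ x'$; the right-hand side of this system \emph{is} locally Lipschitz in $(u,v)$ because $f$ is $\cC^1$ (or merely locally Lipschitz), so uniqueness holds at the level of the system. Your pair $\bigl(G,\,f^{-1}\circ G'\bigr)=\bigl(f^{-1}(x'\circ\phi),\,x\circ\phi\bigr)$ solves that same system with the same data $(x_c,x_c)$ at $t=c$ --- the identities $\phi(c)=c$ and $\phi'(t)\,\phi'(\phi(t))=1$ enter exactly as in your computation --- hence coincides with $(x,\,f^{-1}\circ x')$. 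The second component already yields $x\circ\phi=f^{-1}\circ x'$, that is, (\ref{eqinv1}), with no need to differentiate $G$ a second time. A last bookkeeping point worth one sentence in a written-up version: if $x$ is defined on an interval $I\ni c$, then $G$ is defined on $\phi(I)$, so the uniqueness argument identifies the two solutions on the $\phi$-invariant interval $I\cap\phi(I)$, which is precisely where equation (\ref{eqinv1}) makes sense in the first place.
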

Furthermore, a version of Lemma \ref{lem1b} can be proved for the case with periodic boundary value conditions.\par
Let us consider the equations
\begin{equation}\label{eqinvb}
x'(t)=f(x(\phi(t))), \quad x(a)=x(b)
\end{equation}
and
\begin{equation}\label{odeb}
x''(t)=f'(f^{-1}(x'(t)))f(x(t))\phi'(t), \quad x(a)=x(b)=f^{-1}(x'(a)).
\end{equation}
\begin{lem}[{\cite[Lemma 2.2]{Cab4}}]\label{lem2}
Let $[a,b]\subset\bR$ and let $f:\bR\to\bR$ be a diffeomorphism. Let $\phi\in\cC^1([a,b])$ be an involution such that $\phi([a,b])=[a,b]$. Then $x$ is a solution of the first order differential equation with involution
(\ref{eqinvb}) if and only if $x$ is a solution of the second order ordinary differential equation
(\ref{odeb}).\end{lem}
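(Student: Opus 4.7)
The plan is to prove both implications separately, closely following the strategy of Lemma \ref{lem1b}. As a preliminary observation, a continuous nontrivial involution $\phi$ of $[a,b]$ onto itself must be strictly decreasing, so $\phi(a)=b$, $\phi(b)=a$, and $\phi$ has a unique fixed point $c\in(a,b)$; differentiating $\phi\circ\phi=\Id$ yields $\phi'(\phi(t))\phi'(t)=1$, an identity that will be used repeatedly.

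Necessity is essentially a differentiation. Given that $x$ solves (\ref{eqinvb}), I would first note that substituting $\phi(t)$ for $t$ in (\ref{eqinvb}) and applying the involution property gives $x'(\phi(t))=f(x(t))$, while (\ref{eqinvb}) itself rearranges to $x(\phi(t))=f^{-1}(x'(t))$. Differentiating (\ref{eqinvb}) by the chain rule and substituting these two identities then yields the second-order ODE of (\ref{odeb}). For the boundary condition, evaluating (\ref{eqinvb}) at $t=a$ and using $\phi(a)=b$ together with $x(a)=x(b)$ gives $x'(a)=f(x(b))=f(x(a))$, hence $f^{-1}(x'(a))=x(a)=x(b)$.

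For sufficiency, I would introduce the auxiliary function $y(t):=f^{-1}(x'(t))$, so that $x'=f(y)$ on $[a,b]$. Since $f'$ never vanishes, substitution into (\ref{odeb}) and cancellation of $f'(y)$ recast it as the first-order system
\begin{equation*}
x'(t)=f(y(t)),\qquad y'(t)=f(x(t))\phi'(t),
\end{equation*}
subject to $x(a)=y(a)=x(b)$. The key structural observation is that the pair $(Y,X):=(y\circ\phi,\,x\circ\phi)$ satisfies the same Lipschitz system: a short computation using the chain rule and the identity $\phi'(\phi(t))\phi'(t)=1$ yields $X'(t)=f(Y(t))\phi'(t)$ and $Y'(t)=f(X(t))$. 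Hence, if the two solutions are matched at a single point, ODE uniqueness forces $(Y,X)\equiv(x,y)$ on $[a,b]$, and this identity reads $x(\phi(t))=y(t)=f^{-1}(x'(t))$, which is exactly (\ref{eqinvb}).

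The principal obstacle is ensuring that these two solutions do agree at some point. The natural choice is the fixed point $c$ of $\phi$, where $X(c)=x(c)$ and $Y(c)=y(c)$ hold automatically, reducing the task to the compatibility condition $y(c)=x(c)$, equivalently $x'(c)=f(x(c))$. Establishing this from the boundary data of (\ref{odeb})---or alternatively invoking Lemma \ref{lem1b} directly at $t=c$ once this equality is verified---is the delicate step that closes the argument.
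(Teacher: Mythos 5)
First, a point of reference: the paper does not actually prove this lemma --- it is imported verbatim from \cite{Cab4} --- so there is no in-paper argument to compare yours against. Your necessity direction is correct and complete. In the sufficiency direction you set up exactly the right machinery (the substitution $y=f^{-1}\circ x'$, the first-order system $x'=f(y)$, $y'=f(x)\phi'$, the observation that $(y\circ\phi,\,x\circ\phi)$ solves the same system, and uniqueness for locally Lipschitz systems), but you leave open the one step that carries all the weight: matching the two solutions at a single point, i.e.\ deriving $x'(c)=f(x(c))$ at the fixed point $c$ (equivalently $f^{-1}(x'(b))=x(a)$) from the data $x(a)=x(b)=f^{-1}(x'(a))$. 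This is not a routine verification you can defer; it is a genuine gap, and for the statement exactly as transcribed here it cannot be filled. Take $f=\Id$, $\phi(t)=-t$, $[a,b]=[-\pi/4,\pi/4]$ and $x(t)=\cos t$. Then $x''=-x=f'(f^{-1}(x'))f(x)\phi'$ and $x(-\pi/4)=x(\pi/4)=x'(-\pi/4)=\sqrt2/2$, so $x$ satisfies (\ref{odeb}); but $x'(t)=-\sin t\neq\cos t=f(x(-t))$, so $x$ does not satisfy (\ref{eqinvb}). At the fixed point $c=0$ one has $x'(0)=0\neq 1=f(x(0))$: the compatibility your uniqueness argument needs is exactly what the stated boundary data fail to provide.

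Consequently the sufficiency direction requires something more than what is written: either the symmetric condition $f^{-1}(x'(b))=x(a)$ as an additional hypothesis (with which your argument closes immediately at $t=b$, since then $(y(\phi(b)),x(\phi(b)))=(y(a),x(a))=(x(b),y(b))$ and uniqueness applies), or a restriction on the interval excluding the resonant lengths for which the second-order problem acquires these spurious solutions. As it stands, your proof establishes only the ``only if'' half; you should either supply the missing hypothesis and finish the matching step, or record that the equivalence as transcribed is one-directional.
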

\begin{rem}\label{remlemex}Although not stated in \cite{Cab4}, it is important to notice that the proofs of Lemmas \ref{lem1b} and $\ref{lem2}$ are still valid if we weaken the regularity hypothesis on $f$ and $f^{-1}$ to $f$ and $f^{-1}$ absolutely continuous and $f$ locally Lipschitz.
\end{rem}
Linear problems with involutions, similar to problem \eqref{eqinv1}, have also been studied in \cite{Cab4,CabToj,CabToj2}.
Observe that, from problem \eqref{odeb} we have that
$$0=\frac{x''(t)}{f'(f^{-1}(x'(t)))}- \,f(x(t))\phi'(t)=(f^{-1})'(x'(t))x''(t)- \,f(x(t))\phi'(t)=(f^{-1}\circ x')'(t)- \,f(x(t))\phi'(t).$$
So, clearly, problem \eqref{odeb} is equivalent to the problem
\begin{equation}\label{eqperbis1} (f^{-1}\circ x')'(t)-\phi'(t)f(x(t))=0,\quad x(a)=x(b),\quad x'(a)=f(x(a)). \end{equation}
Which involves the $f^{-1}$-Laplacian $(f^{-1}\circ x')'$, although, contrary to most literature, the other term in the equation does not involve $f^{-1}$ but $f$. As we will see, this is not more than a further generalization in the line of the $p-q$-Laplacian.\par
Problems concerning the $\varphi$-Laplacian (or, particularly, the $p$-Laplacian) have been studied extensively in recent literature. Dr\'abek, Man\'asevich and others study the eigenvalues of problems with the $p$-Laplacian in \cite{Dra4,Bin,Dra,Dra2, Pin} using variational methods. The existence of positive solutions is treated in \cite{Dra3}, the existence of an exact number of solutions in \cite{San} and topological existence results can be found in \cite{Pin2}. Anti-maximum principles and sign properties of the solutions are studied in \cite{CabCid, CabLom}.  In \cite{Chv} they study a variant of the $p$-Laplacian equation with an approach based on variational methods, in \cite{BogDos} they study the eigenvalues of the Dirichlet problem and in \cite{DosOz} they find some oscillation criteria for equations with the $p$-Laplacian.
\par
The $\varphi$-Laplacian is studied from different points of view in several papers, e.\,g. \cite{Hei,Ore,Kar,CabSta,Aga,Cec,Liu}.
Actually, if we consider the problem with the $f^{-1}$-Laplacian
\begin{equation}\label{eqper3} (f^{-1}\circ x_{c}')'(t)+f(x_{c}(t))=0,\quad \quad x_{c}(a)={c},\quad x_{c}'(a)=f({c}),\end{equation}
and we assume there exist $\ol {c}_1,\ol {c}_2\in\bR$, $\ol {c}_1<\ol {c}_2$, such that a unique solution of problem \eqref{eqper3} exists for every ${c}\in[\ol {c}_1,\ol {c}_2]$ and $(x_{\ol {c}_1}(b)-\ol {c}_1)(x_{\ol {c}_2}(b)-\ol {c}_2)<0$, then problem \eqref{odeb} must have at least a solution due to the continuity of $x_{c}$ on ${c}$ and Bolzano's theorem. For this reason we will be interested in studying the properties of problem \eqref{eqper3} and its solutions in this paper. In the sections to come we study this problem and more general versions of it.\par

In the following section we will study the existence, uniqueness and periodicity of solutions of problem \eqref{eqperg} and in Section 3 we will apply these results to the case of problems with reflection.

\section{General solutions}

First, we write in a general way the solutions of equations involving the $g-f$-Laplacian.

Let $\tau_i,\sigma_i\in[-\infty,\infty]$, $i=1,\dots,4$, $\tau_1<\tau_2$, $\sigma_1<\sigma_2$, $\tau_3<\tau_4$, $\sigma_3<\sigma_4$. Let $f:(\tau_1,\tau_2)\to(\sigma_1,\sigma_2)$ and $g:(\tau_3,\tau_4)\to(\sigma_3,\sigma_4)$ be invertible functions such that $f$ and $g^{-1}$ are continuous. Assume there is $s_0\in(\tau_1,\tau_2)$ such that $f(s_0)=0$ and define $F(t):=\int_{s_0}^tf(s)\dif s$. Observe that $F$ is $0$ at $s_0$ and of constant sign everywhere else. The following Lemma is an straightforward application of the properties of the integral.\par
The following result holds immediately from the properties of continuous real functions.
\begin{lem} If $f$ is continuous, invertible and increasing (decreasing) then
$F_-\equiv F|_{(-\infty,s_0]}$  is strictly decreasing (increasing) and $F_+\equiv F|_{[s_0,+\infty)}$ is strictly increasing (decreasing). Furthermore, if $\tau_1=-\infty$, $F(-\infty)=+ \infty$ $(-\infty)$ and if $\tau_2=+\infty$, $F(+\infty)=+ \infty$ $(-\infty)$.
\end{lem}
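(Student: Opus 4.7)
The plan is to read everything off from the sign of $f$ on either side of $s_0$ combined with the Fundamental Theorem of Calculus. I would treat the increasing case explicitly; the decreasing case follows by replacing $f$ with $-f$, which swaps the sign of $F$ and hence exchanges ``increasing'' with ``decreasing'' and $+\infty$ with $-\infty$ throughout.

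First, since $f$ is continuous, invertible and increasing on $(\tau_1,\tau_2)$ with $f(s_0)=0$, injectivity together with monotonicity forces $f(s)<0$ for $s\in(\tau_1,s_0)$ and $f(s)>0$ for $s\in(s_0,\tau_2)$. Continuity of $f$ gives $F'(t)=f(t)$ pointwise on $(\tau_1,\tau_2)$, and the two sign statements above translate immediately into: $F_-$ is strictly decreasing and $F_+$ is strictly increasing. This handles the first half of the lemma.

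For the asymptotic claim when $\tau_1=-\infty$, monotonicity of $f$ means the limit $L:=\lim_{s\to-\infty}f(s)=\sigma_1$ exists and lies in $[-\infty,0)$, so one can pick $M>0$ and $T<s_0$ with $f(s)\le -M$ for every $s\le T$ (this is valid whether $L$ is finite and negative or $L=-\infty$). Then for $t<T$,
\begin{equation*}
F(t)=-\int_t^{T}f(s)\,\dif s-\int_T^{s_0}f(s)\,\dif s\ge M\,(T-t)+C,
\end{equation*}
where $C:=-\int_T^{s_0}f(s)\,\dif s$ is a finite constant independent of $t$. Letting $t\to-\infty$ gives $F(-\infty)=+\infty$. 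The case $\tau_2=+\infty$ is identical, extracting a lower bound $f(s)\ge M>0$ for $s$ sufficiently large and writing $F(t)\ge M(t-T)+C'$.

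There is no genuine obstacle here; the only mild nuisance is that $\sigma_1,\sigma_2$ are allowed to be either finite or infinite, but in both sub-cases one obtains the same uniform bound $|f(s)|\ge M$ outside a compact set, which is all the integral estimate uses.
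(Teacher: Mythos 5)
Your proof is correct. The paper itself offers no argument for this lemma (it is declared to follow ``immediately from the properties of continuous real functions''), and your write-up — sign of $f$ on either side of $s_0$ via strict monotonicity and injectivity, the Fundamental Theorem of Calculus for the monotonicity of $F_\pm$, and a uniform bound $|f(s)|\ge M$ outside a compact set to force $F\to+\infty$ (resp.\ $-\infty$) at an infinite endpoint, with the decreasing case reduced to the increasing one by passing to $-f$ — is precisely the straightforward argument the authors intended to leave to the reader.
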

All the same, define $G(t):=\int_{g^{-1}(\{0\})}^tg^{-1}(s)\dif s$ and consider the problem
\begin{equation}\label{eqperg} (g\circ x')'(t)+\,f(x(t))=0,\quad \text{a.\,e. }t\in\bR, \quad x(a)=c_1,\quad x'(a)=c_2, \end{equation}
for some fixed $c_1,c_2\in\bR$.\par
A \textit{solution} $x$ of problem \eqref{eqperg} will be $x\in \cC^{1}(I)$ such that is absolutely continuous on $I$ where  $I$ is an open interval with $a\in I$. The solution must further satisfy that the equation in problem \eqref{eqperg} satisfied a.\,e and the initial conditions are satisfied as well.
\begin{thm}\label{thmex} Let $f:(\tau_1,\tau_2)\to(\sigma_1,\sigma_2)$ and $g:(\tau_3,\tau_4)\to(\sigma_3,\sigma_4)$ be invertible functions such that $f$ and $g^{-1}$ are continuous and assume $0\in(\tau_1,\tau_2)\cap(\tau_3,\tau_4)$, $f(0)=0$, $g(0)=0$, $f$ and $g$ increasing, $F(c_1)+G(g(c_2))<\min\{G(\sigma_3),G(\sigma_4)\}$. Then there exists a unique local solution of problem \eqref{eqperg}.\par
Furthermore, if $F(c_1)+G(g(c_2))<\min\{F(\tau_1),F(\tau_2)\}$,
then such solution is defined on the whole real line and is periodic of smallest period
\begin{equation}\label{pfor}T:=\int_{F^{-1}_-(G(g(c_2))+F(c_1))}^{F_+^{-1}\(G(g(c_2))+F(c_1)\)}\[\frac{1}{g^{-1}\circ G_+^{-1}(G(g(c_2))+F(c_1)-F(r))}-\frac{1}{g^{-1}\circ G_-^{-1}(G(g(c_2))+F(c_1)-F(r))}\]\dif r.\end{equation}
\end{thm}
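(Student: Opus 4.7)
The plan is to exploit the Hamiltonian structure of \eqref{eqperg}. With $H(u,v):=F(u)+G(v)$, the identities $F'=f$ and $G'=g^{-1}$ and the chain rule for absolutely continuous functions yield
\[\frac{\dif}{\dif t}\bigl[F(x(t))+G(g(x'(t)))\bigr]=x'(t)\bigl[(g\circ x')'(t)+f(x(t))\bigr]=0,\]
so every solution of \eqref{eqperg} satisfies the first integral $H(x(t),g(x'(t)))\equiv E:=F(c_1)+G(g(c_2))$. This conservation law is the engine of the whole argument: it reduces the equation to quadratures on each half-orbit, from which the period formula drops out.

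For local existence and uniqueness I would rewrite \eqref{eqperg} as the planar system $x'=g^{-1}(y)$, $y'=-f(x)$ with $x(a)=c_1$, $y(a)=g(c_2)$. The hypothesis $E<\min\{G(\sigma_3),G(\sigma_4)\}$ together with $G(y)=E-F(x)\le E$ forces $y(t)\in[G_-^{-1}(E),G_+^{-1}(E)]\subset(\sigma_3,\sigma_4)$, so the right-hand side is well defined along the solution, and Peano's theorem produces a local solution. When $c_1=c_2=0$, uniqueness is trivial ($x\equiv0$); otherwise $E>0$, the level set $\{H=E\}$ is a compact closed curve disjoint from the origin, and on each arc on which $x'$ has constant sign the energy relation pins $y=G_\pm^{-1}(E-F(x))$. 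Hence $x$ satisfies the scalar autonomous equation $x'=g^{-1}(G_\pm^{-1}(E-F(x)))$, whose right-hand side is continuous and vanishes only at the turning points $F_\pm^{-1}(E)$; classical uniqueness for scalar autonomous ODEs away from zeros of the right-hand side, together with matching through the conserved $H$ at the turning points, yields uniqueness of $x$.

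Under the stronger assumption $F(c_1)+G(g(c_2))<\min\{F(\tau_1),F(\tau_2)\}$, the conservation law $F(x)\le E$ similarly confines $x(t)$ to $[F_-^{-1}(E),F_+^{-1}(E)]\subset(\tau_1,\tau_2)$, so both $x$ and $x'$ remain bounded away from the boundaries of their respective domains and the local solution extends to all of $\bR$. The orbit is a simple closed curve around the origin traversed with nonvanishing velocity off the turning points, so $x$ is periodic. The period is computed by separating variables: on the ascending arc, with $x$ running from $F_-^{-1}(E)$ to $F_+^{-1}(E)$ and $g(x')>0$, one integrates $\dif t=\dif r/g^{-1}(G_+^{-1}(E-F(r)))$; on the descending arc, with $x$ running back and $g(x')<0$, one integrates $\dif t=\dif r/g^{-1}(G_-^{-1}(E-F(r)))$ in the reverse direction; adding the two contributions yields \eqref{pfor}.

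The principal obstacle is the behaviour at the turning points $r=F_\pm^{-1}(E)$, where the integrand in \eqref{pfor} diverges. One has to verify that the improper integral is finite, equivalently that the flow reaches the turning points in finite time; this amounts to showing that the singularity is integrable (of inverse-square-root type in the generic quadratic case, and in general inherited from the structure of $f$ and $g^{-1}$ near $0$). Minimality of $T$ as a period then follows from the strict monotonicity of $x$ on each half-orbit, which rules out any strictly smaller period.
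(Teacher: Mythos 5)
Your strategy --- the first integral $F(x(t))+G(g(x'(t)))\equiv E:=F(c_1)+G(g(c_2))$, the reduction to the planar system $x'=g^{-1}(y)$, $y'=-f(x)$, the confinement of the orbit by the energy bounds, and separation of variables on the two monotone arcs --- is exactly the route the paper takes. However, your closing paragraph leaves the decisive step unproved: you must show that the solution reaches the turning points $F_\pm^{-1}(E)$ in \emph{finite} time, equivalently that the improper integral in \eqref{pfor} converges, and ``the singularity is integrable\dots inherited from the structure of $f$ and $g^{-1}$ near $0$'' is an assertion, not an argument. Without it, the orbit could a priori creep into a turning point only asymptotically: the solution would still be global and bounded but monotone on a half-line, hence not periodic, and the formula would read $T=+\infty$. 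Since $f$ and $g$ are only assumed continuous, increasing and vanishing at $0$, there is no ``generic quadratic'' or inverse-square-root behaviour to appeal to, so this point genuinely requires a proof; it is the technical heart of the theorem.

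The paper closes this gap dynamically rather than by estimating the integrand: assuming $t_0:=\sup\{t\ :\ x'>0 \text{ a.\,e.\ on } [a,t)\}=+\infty$, it shows $x'(+\infty)=0$ and $x(+\infty)=F_+^{-1}(E)>0$, so that $f(x(t))>\e>0$ for $t$ large; integrating $(g\circ x')'=-f(x)$ over a unit interval then forces $g\circ x'$ to drop by more than $\e$, contradicting $g\circ x'\to 0$. Hence $t_0<\infty$, and likewise for the subsequent arcs. If you prefer to stay within your quadrature picture, the convergence can also be obtained directly: near $r^*=F_+^{-1}(E)$ one has $f(r)\ge\d>0$ (because $r^*>0$ and $f$ is increasing), and the substitutions $u=E-F(r)$ followed by $u=G(v)$ give
\begin{equation*}
\int_{r_0}^{r^*} \frac{\dif r}{g^{-1}\(G_+^{-1}(E-F(r))\)}\le \frac{1}{\d}\int_0^{u_0} \frac{\dif u}{g^{-1}\(G_+^{-1}(u)\)}=\frac{1}{\d}\int_0^{G_+^{-1}(u_0)} \frac{g^{-1}(v)}{g^{-1}(v)}\,\dif v=\frac{G_+^{-1}(u_0)}{\d}<\infty,
\end{equation*}
since $G'=g^{-1}$ cancels the denominator exactly. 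One of these arguments must be inserted where you wrote ``one has to verify''; with that done, the remainder of your outline (local uniqueness via the strictly monotone quadrature map, global extension by the energy confinement under the second hypothesis, assembly of the period from the ascending and descending arcs, and minimality from strict monotonicity on each half-orbit) matches the paper's proof.
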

\begin{proof} For the first part of the Theorem and without loss of generality, we will prove the existence of solution in an interval of the kind $[a,a+\d)$, $\d\in\bR^+$. The proof would be analogous for an interval of the kind $(a-\d, a]$.\par
Let $y(t)=g(x'(t))$. Then problem \eqref{eqperg} is equivalent to
$$x'(t)=g^{-1}(y(t)),\quad y'(t)=-\,f(x(t)),\quad x(a)=c_1,\ y(a)=g(c_2).$$
Hence,
$$f(x(t))x'(t)+g^{-1}(y(t))y'(t)=0,$$
so, integrating both sides from $a$ to $t$,
\begin{equation*}\label{poteq}F(x(t))+G(y(t))=k,\end{equation*}
where $k=F(c_1)+G(g(c_2))$. That is, undoing the change of variables,
 \begin{equation}\label{eqprior} G(g(x'(t)))=G(g(c_2))+F(c_1)-F(x(t)).\end{equation}
 
 \par If $c_1=c_2=0$ it is clear that the only possible solution is $x\equiv 0$. Assume, without loss of generality, that $c_2$ is non-negative and $c_1$ negative (the other cases are similar). If $c_2=0$ then, integrating \eqref{eqperg},
\begin{equation*}\label{eqint}g\circ x'(t)=-\int_a^tf(x(s))\dif s,\end{equation*}
which implies $x'$ is positive in some interval $[a,a+\d)$. If $c_2$ is positive, then $x'$ has to be positive at least in some neighborhood of $a$, so, in a right neighborhood of $a$, we can solve for $g\circ x'$ in \eqref{eqprior} as
\begin{equation}\label{eqgxp}g\circ x'(t)=G_+^{-1}(F(c_1)-F(x(t))+G(g(c_2))).\end{equation}
In order to solve for $x'$ in \eqref{eqgxp}, we need $F(c_1)+G(g(c_2))<G(\sigma_4)$. Then,
\begin{equation}\label{eqx'} x'(t)=g^{-1}\circ G_+^{-1}(F(c_1)-F(x(t))+G(g(c_2))).\end{equation}
Integrating between $a$ and $t$,
$$t=\int_a^t\frac{x'(s)}{g^{-1}\circ G_+^{-1}(F(c_1)-F(x(s))+G(g(c_2)))}\dif s+a=H_+(x(t)),$$
where
$$H_+(r):=\int_{c_1}^r\frac{1}{g^{-1}\circ G_+^{-1}(F(c_1)-F(s)+G(g(c_2)))}\dif s+a.$$
$H_+$ is strictly increasing in its domain due to the positivity of the denominator in the integrand. Hence, for $t$ sufficiently close to $a$,
$$x(t)=H_+^{-1}(t).$$
Therefore, a solution of problem \eqref{eqperg} exists and is unique on an interval $[a,a+\delta)$.\par
If we assume $F(c_1)+G(g(c_2))<\min\{F(\tau_1),F(\tau_2)\}$, $c_2>0$ (the case $c_2=0$ is similar), $H_+$ is well defined on $$I:=\(F_-^{-1}(F(c_1)+G(g(c_2))),F_+^{-1}(F(c_1)+G(g(c_2)))\).$$
Now, we study the range of $H_+$. $g(x'(t))$ is positive as long as $x'(t)$  is negative. Hence, consider $$t_0:=\sup\{t\in[a,+\infty)\ :\ x'(s)>0 \text{ for a.\,e. } s\in[a,t)\}\in[a,+\infty].$$
$G$ is positive on non-zero values, so equation \eqref{eqprior} implies that, $F(x(t))< G(g(c_2))+F(c_1)$ for all $t\in(a,t_0)$.\par
Assume $t_0=+\infty$. Now, $x'(t)>0$ a.\,e. in $[a,+\infty)$ so there exists $x(+\infty)\in(c_1,F^{-1}_+(G(g(c_2))+F(c_1))]$.\par
On the other hand, since $x$ is increasing in $[a,+\infty)$ and $c_1<0$, by equation \eqref{eqx'} we have that $x'$ is increasing as long as $x$ is positive. This means that, eventually (in finite time), $x$ will be positive and therefore, $x'$ is decreasing in $[\til a,+\infty)$ for $\til a$ big enough, so there exists $x'(+\infty)\ge 0$. If we assume $x'(+\infty)=\d>0$, this implies that $x(+\infty)=+\infty$, for there would exist $M\in\bR$ such that $x'(t)>\d/2$ for every $t\ge M$, so $x'(+\infty)=0$. Taking the limit $t\to+\infty$ in equation \eqref{eqprior}, $x(+\infty)=F^{-1}_+(G(g(c_2))+F(c_1))$.\par
Now, take $\e\in(0,f(x(+\infty)))$. Since $g\circ x'(+\infty)=0$ and $g\circ x'$ is continuous, there exists $M\in\bR^+$ such that $|g(x'(M_2))-g(x'(M_1))|<\e$ for every $M_1,$ $M_2>M$. Since $f$ is continuous, there exits $\til M>M$ such that $f(x(M_3))>\e$ for every $M_3>\til M$. Take $M_3$ in such a way. Then, integrating equation \eqref{eqperg} between $M_3$ and $M_3+1$,
$$(g\circ x')(M_3+1)-(g\circ x')(M_3)=\int_{M_3}^{M_3+1}f(x(s))\dif s>\e,$$
a contradiction. Therefore, $t_0\in\bR$.\par
Observe that $x'(t_0)=0$, so $x$ attains its maximum at $t_0$ and $x(t_0)=F^{-1}_+(G(g(c_2))+F(c_1))$ by equation \eqref{eqprior}, that is, $x(t_0)=\sup I$. In order for this value to be well defined it is necessary that $G(g(c_2))+F(c_1)\le F(\tau_2)$.\par
Now, we have that $H_+$ is well defined at $\sup I$ (assuming it is defined continuous at that point). Indeed,
$$t_0=\lim_{t\to t_0}H_+(x(t))=H_+(F^{-1}_+(G(g(c_2))+F(c_1))).$$
 We prove now that there there is a neighborhood $(t_0,t_0+\e)$ where $x'$ is negative, which means that we can take
 $$t_1:=\sup\{t\in[t_0,+\infty)\ :\ x'(s)<0 \text{ for a.\,e. } s\in[t_0,t)\}.$$
Fix $\d<f(x(t_0))$ and take $\e$ such that $f(x(t))>\d$ in $(t_0, t_0+\e)$. Take $t\in(t_0, t_0+\e)$, then, integrating equation \eqref{eqperg} between $t_0$ and $t$,
$$g(x'(t))=-\int_{t_0}^tf(x(s))\dif s<-\e(t-t_0)<0.$$
We deduce that $t_1<+\infty$ by the same kind of reasoning we used to prove $t_0<+\infty$. Observe that $x'(t_1)=0$ and $x(t_1)=F_-^{-1}\(G(g(c_2))+F(c_1)\)$. This last equality comes from evaluating equation \eqref{eqprior} at $t_1$ and Rolle's Theorem as we show now: the other possibility would be $x(t_1)=F_+^{-1}\(G(g(c_2))+F(c_1)\)$. Observe that, by equation \eqref{eqx'}, $x'$ is continuous, so $x\in\cC^1$. Since $x(t_0)=x(t_1)$, there would exist $\til t\in(t_0,t_1)$ such that $x'(\til t)=0$, a contradiction.\par
Now, we have that  $x'(t)=g^{-1}\circ G_-^{-1}(G(g(c_2))+F(c_1)-F(x(t)))$, that is, $$1=x'(t)/g^{-1}\circ G_-^{-1}(G(g(c_2))+F(c_1)-F(x(t))).$$ Thus,
$$t_1-t_0=\int_{t_0}^{t_1}\frac{x'(s)\dif s}{g^{-1}\circ G_-^{-1}(G(g(c_2))+F(c_1)-F(x(s)))}=\int_{F^{-1}_+(G(g(c_2))+F(c_1))}^{F_-^{-1}\(G(g(c_2))+F(c_1)\)}\frac{\dif r}{g^{-1}\circ G_-^{-1}(G(g(c_2))+F(c_1)-F(r))}.$$
If we define
$$H_{-}(s):=\int_{F^{-1}_+(G(g(c_2))+F(c_1))}^{s}\frac{\dif r}{g^{-1}\circ G_-^{-1}(G(g(c_2))+F(c_1)-F(r))}+t_0,$$
$H_{-}$ is strictly decreasing in its domain and $x(t)=H_{-}^{-1}(t)$ for $t\in[t_0,t_1]$.\par
We can again deduce that
$$t_2:=\sup\{t\in[t_1,+\infty)\ :\ x'(s)>0 \text{ for a.\,e. } s\in[t_1,t)\}<+\infty.$$
Using the positivity and growth conditions of the functions involved, it is easy to check that $x(t_1)=F_-^{-1}\(G(g(c_2))+F(c_1)\)<c_1<F_+^{-1}\(G(g(c_2))+F(c_1)\)=x(t_2)$, so there exists a unique $b\in(t_1,t_2)$ such that $x(b)=c_1$.
Now,
$$b-t_1=\int_{t_1}^{b}\frac{x'(s)\dif s}{g^{-1}\circ G_+^{-1}(G(g(c_2))+F(c_1)-F(x(s)))}=\int_{F_-^{-1}\(G(g(c_2))+F(c_1)\)}^{c_1}\frac{\dif r}{g^{-1}\circ G_+^{-1}(G(g(c_2))+F(c_1)-F(r))}.$$
Defining $T:=b-a$ and extending $x$ periodically  in the following way (we have $x$ already defined in $[a,a+T]$),
$$x(t)=x\(t-\left\lfloor\frac{t-a}{T}\right\rfloor T\),$$
where $\lfloor t\rfloor:=\sup\{k\in\bZ\ :\ k\le t\}$, it is easy to check that $x$, extended in such a way,  is a global periodic solution of problem \eqref{eqperg}. Take $z(t):=x(t-T)$, $t\in\bR$, we show that $z$ is a solution of the problem in $[a+T,a+2T]$.
\begin{align*}0 & =(g\circ x')'(t)+f(x(t))=(g\circ z')'(t+T)+f(z(t+T))\end{align*}
This is equivalent to
$$(g\circ z')'(t)+f(z(t))=0\quad\text{ for a.\,e}\ t\in\bR.$$
Also,
$$z(a)=x(a+T)=x(b)=c_1,\quad z'(a)=x'(b)=g^{-1}\circ G_+^{-1}(F(c_1)-F(x(b))+G(g(c_2)))=c_2.$$
\end{proof}
\begin{rem}\label{dgtrem} A similar argument can be done for the case $f$ and $g$ have different growth  type (e.\,g. $f$ increasing and $g$ decreasing), but taking the negative branch of the inverse function $G^{-1}$ in \eqref{eqx'}.
\end{rem}
\begin{rem}\label{disprem} In the hypothesis of theorem \ref{thmex}, if instead of $g(0)=f(0)=0$ we have that $g(s_0)=f(s_0)=0$, define $\til f(x):=f(x+s_0)$, $\til g(x):=g(x+s_0)$. Then $\til f(0)=\til g(0)=0$ and problem \eqref{eqperg} is equivalent to
$$(\til g\circ v')'(t)+\til f(v(t))=0,\quad v(a)=c_1-s_0,\quad v(a)=c_2,$$
with $v(t)=x(t)-s_0$. Hence, we can apply Theorem \ref{thmex} to this case.
\end{rem}
\begin{rem} Using the notation of the Theorem, the explicit form of the solution of problem \eqref{eqperg} is given by
$$x(t)=\begin{dcases}H^{-1}_+\(t-\left\lfloor\frac{t-a}{T}\right\rfloor T\), & t\in [a+2Tk,a+(2k+1)T],\ k\in\bZ,\\H^{-1}_-\(t-\left\lfloor\frac{t-a}{T}\right\rfloor T\), & t\in [a+(2k-1)T,a+2kT],\ k\in\bZ,
\end{dcases}$$
\end{rem}
\begin{rem} Consider the following particular case of problem \eqref{eqperg} with  $f(0)=0$, $g(0)=0$, $f$ and $g$ increasing and the hypothesis for a unique global solution of the following problem are satisfied in Theorem \ref{thmex}.
\begin{equation}\label{eqarcsin} (g\circ x')'(t)+\,f(x(t))=0,\quad \quad x(0)=0,\quad x'(0)=1.\end{equation}
It is clear that, in the case $g(x)=f(x)=x$, the unique solution of problem \eqref{eqarcsin} is $\sin(t)$, which suggests the definition of the $\sin_{g,f}$ function as the unique solution of problem \eqref{eqarcsin} for general $g$ and $f$. Correspondingly,
$$\arcsin^+_{g,f}(r):=H_+(r)$$
This function, defined as such, coincides with the $\arcsin_p$ function defined in \cite{Kle,Bus} for the $p$-Laplacian $f(x)=g(x)=|x|^{p-2}x$, the function $\arcsin_{p,q}$ defined in \cite{Edm,Bha,Jia} for the $p-q$-Laplacian $f(x)=|x|^{q-2}x$, $g(x)=|x|^{p-2}x$, which first appeared with a slightly different definition in \cite{Dra}, and the hyperbolic version of this function, also in \cite{Bha,Jia}, which corresponds to the case $f(x)=|x|^{q-2}x$, $g(x)=-|x|^{p-2}x$. \cite{Tak} derives generalized Jacobian functions in a similar way, defining
$$\operatorname{arcsn}_{p,q}(t,k):=\int_{0}^t\frac{1}{\sqrt[p]{(1-s^q)(1-k^qs^q)}}\dif s,$$
of which the inverse (see \cite[Proposition 3.2]{Tak}) is precisely a solution of 
$$(f_p\circ x'(t))'+\frac{q}{p^*}f_q(x(t))(1+k^q-2k^q|x(t)|^q)=0.$$
where $f_r$ is the $r$-Laplacian for $r=p,q$ and $p^*p=p^*+p$. Observe this case is also covered by our definition.\par
In all of the aforementioned works they are interested on the inverse of the $\arcsin_{g,f}$ function, the $\sin_{g,f}$ function, which they extend to the whole real line by symmetry and periodicity. Observe that in our case $f$ and $g$ need not to be odd functions, contrary to the above examples, but we can still give the definition of the $\sin_{g,f}$ function in the whole real line. Also, this lack of symmetry gives rise to a richer set of right inverses of $\sin_{g,f}$, for instance,\par
$$\arcsin^-_{g,f}(r):=H_-(r),$$
In general, if we have a problem of the kind
$$\Phi((g\circ x')',x(t))=0;\quad x(0)=0,\ x'(0)=1,$$
and we know it has a unique solution in a neighborhood of $0$, then we can define $\sin_{g,\Phi}$ as such unique solution and its inverse, in a neighborhood of $0$, $\arcsin_{g,\Phi}$.
\end{rem}

We now study the periodicity of the solutions of problem \eqref{eqperg} with the functions and constants defined in the previous section.
\subsection{A particular case}
Having in mind problem \eqref{eqper3}, we now consider a particular case of problem \eqref{eqperg} for the rest of this section. Assume $f$ is invertible and both $f$ and $f^{-1}$ are continuous. For convenience, assume also that $f$ is increasing and $f(0)=0$. Consider the following problem.
\begin{equation}\label{eqpergpar} (f^{-1}\circ x')'(t)+\l\,f(x(t))=0,\quad x(a)=c, \ x'(a)=f(c),\end{equation}
where $\l\in\bR^+$.\par
The following corollary is just the restatement of Theorem \ref{thmex} for this particular case.
\begin{cor}\label{thmexc} Let $f:(\tau_1,\tau_2)\to(\sigma_1,\sigma_2)$ be an invertible function such that $f$ is continuous and assume $0\in(\tau_1,\tau_2)$, $f(0)=0$ and $f$ increasing, $\l>0$, $(1+\l)F(c)<\min\{F(\tau_1),F(\tau_2)\}$. Then there exists a unique local solution of problem \eqref{eqpergpar}.\par
Furthermore, if $(1+\l^{-1})F(c)<\min\{F(\tau_1),F(\tau_2)\}$,
then such solution is defined on $\bR$ and is periodic of first period
\begin{equation}\label{pfor2}T:=\int_{F_-^{-1}\((1+\lambda^{-1})F(c)\)}^{F_+^{-1}\((1+\lambda^{-1})F(c)\)}\[\frac{1}{f(F_+^{-1}((1+\lambda)F(c)-\lambda\,F(r)))}-\frac{1}{f(F_-^{-1}((1+\lambda)F(c)-\lambda\,F(r)))}\]\dif r.\end{equation}
\end{cor}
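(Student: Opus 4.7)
The plan is to obtain this corollary as a direct specialization of Theorem \ref{thmex}, applied to the data $\til g := f^{-1}$ and $\til f := \l f$ with initial values $c_1 = c$, $c_2 = f(c)$. First I would verify that these substitutions satisfy the structural hypotheses of Theorem \ref{thmex}: the map $\til g : (\sigma_1,\sigma_2) \to (\tau_1,\tau_2)$ is invertible with continuous inverse $\til g^{-1} = f$, while $\til f$ is continuous, invertible, increasing and vanishes at $0$. The initial-value problem for this new data is exactly \eqref{eqpergpar}, since $\til g(c_2) = f^{-1}(f(c)) = c$ matches the requirement $x'(a) = f(c)$ once we rewrite $(f^{-1}\circ x')' + \l f(x) = (\til g\circ x')' + \til f(x)$.

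Next I would compute the two primitives that enter Theorem \ref{thmex}: the primitive of $\til f$ is $\til F(t) = \l F(t)$, and the primitive of $\til g^{-1} = f$ is $\til G(t) = F(t)$. Hence the pivotal quantity of the theorem becomes
\begin{equation*}
\til F(c_1) + \til G(\til g(c_2)) \;=\; \l F(c) + F(c) \;=\; (1+\l)\,F(c).
\end{equation*}
After identifying that the ``$\sigma_3, \sigma_4$'' of Theorem \ref{thmex} (the codomain endpoints of $\til g$) are our $\tau_1, \tau_2$, the local-existence hypothesis of Theorem \ref{thmex} translates into $(1+\l)F(c) < \min\{F(\tau_1), F(\tau_2)\}$, which is precisely our first assumption.

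For the global periodic claim, the second hypothesis $\til F(c_1) + \til G(\til g(c_2)) < \min\{\til F(\tau_1), \til F(\tau_2)\}$ of Theorem \ref{thmex} rewrites as $(1+\l)F(c) < \l\min\{F(\tau_1), F(\tau_2)\}$, i.e. $(1+\l^{-1})F(c) < \min\{F(\tau_1), F(\tau_2)\}$, matching the corollary. To conclude, I would specialize the period formula \eqref{pfor}: the identity $\til F^{-1}_\pm(s) = F^{-1}_\pm(s/\l)$ converts the limits of integration into $F^{-1}_\pm((1+\l^{-1})F(c))$, while $\til g^{-1}\circ\til G^{-1}_\pm = f\circ F^{-1}_\pm$ applied to $(1+\l)F(c) - \l F(r)$ yields the two integrand terms of \eqref{pfor2}. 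The only real difficulty is the notational bookkeeping of which endpoints $\tau_i, \sigma_j$ of Theorem \ref{thmex} play which role after the substitution $\til g = f^{-1}$; once that is tracked correctly, the statement reduces to a direct rewriting of Theorem \ref{thmex}.
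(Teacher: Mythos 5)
Your proposal is correct and coincides with the paper's own treatment: the paper gives no separate proof, stating only that the corollary ``is just the restatement of Theorem \ref{thmex} for this particular case,'' and your substitutions $\til g=f^{-1}$, $\til f=\l f$, $c_1=c$, $c_2=f(c)$, together with $\til F=\l F$ and $\til G=F$, carry out exactly that specialization, correctly recovering both hypotheses and the period formula \eqref{pfor2}.
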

There are some particular cases where the formula \eqref{pfor2} can be simplified.\par
If $f$ is odd then $F$ is even and, with the change of variables $r=c\,s$, we have that expression \eqref{pfor2} becomes
$$T=\int_{0}^{\frac{F_+^{-1}((1+\lambda^{-1})F(c))}{c}}\frac{4\,c\dif r}{f(F_+^{-1}((1+\lambda)F(c)-\lambda\,F(c\,r)))}.$$\par
Also, if we further assume that $f$ satisfies is defined in $\bR$ and that $f(rt)=h(r)f(t)$ for every $r,t\in\bR$ (see Remark \ref{remhomomorphism} for a classification of such functions) and some function $h$, then
$$F(rt)=\int_0^{rt}f(s)\dif s=\int_0^{t}f(rs)r\dif s=r\,h(r)\int_0^{t}f(s)\dif s=\,rh(r)F(t),$$
so $F$ satisfies the same kind of property for $\til h(r)=r\,h(r)$. Clearly,
$$F_-^{-1}(\til h(r)t)=r\,F_-^{-1}(t), \quad F_+^{-1}(\til h(r)t)=r\,F_+^{-1}(t).$$ Observe that $\til h(r)=F(r)/F(1)$, and therefore $\til h|_{(-\infty,0]}$, $\til h|_{[0,+\infty)}$ are invertible. Also, $\til h_+^{-1}(t)=F_+^{-1}(t\,F(1))$ for $t>0$. Hence,
\begin{align*}
\frac{F_+^{-1}((1+\lambda^{-1}))F(c))}{c} & =\frac{F_+^{-1}(\til h(\til h_+^{-1}(1+\lambda^{-1})F(c))}{c}=\frac{\til h_+^{-1}(1+\lambda^{-1})F_+^{-1}(F(c))}{c} \\ &=\til h_+^{-1}(1+\lambda^{-1}) =F_+^{-1}((1+\lambda^{-1})F(1)).
\end{align*}
All the same, $F_-^{-1}((1+\lambda^{-1})F(c))/c=F_-^{-1}((1+\lambda^{-1})F(1))$.\par
Also,
\begin{align*}
& f(F_+^{-1}((1+\lambda)F(c)-\lambda\,F(c\,r)))=f(F_+^{-1}((1+\lambda)\til h(c)F(1)-\lambda\,\til h(c)F(r)))\\ = & f(F_+^{-1}(\til h(c)[(1+\lambda)F(1)-\lambda\,F(r))])=f(c\,F_+^{-1}((1+\lambda)F(1)-\lambda\,F(r))) \\ = & h(c)f(F_+^{-1}((1+\lambda)F(1)-\lambda\,F(r)))=(f(c)/f(1))f(F_+^{-1}((1+\lambda)F(1)-\lambda\,F(r))).
\end{align*}
\par With these considerations in mind, we have that we can further reduce expression \eqref{pfor2} to
$$T(c,\lambda)=\frac{4\,c\,f(1)}{f(c)}\int_{0}^{\frac{F_+^{-1}((1+\lambda^{-1})F(1))}{c}}\frac{\dif r}{f(F_+^{-1}((1+\lambda)F(1)-\lambda\,F(r)))}.$$
\begin{exa}\label{exaplap}
Let $f(t):=|t|^{p-2}t$, $p>1$. Then
$$T(c,\lambda,p)=4\,c^{2-p}\int_{0}^{(1+\lambda^{-1})^\frac{1}{p}}[1+\lambda-\lambda\,r^p]^{\frac{1-p}{p}}\dif r.$$
Observe that with the change of variable $r=(1+\lambda^{-1})^{\frac{1}{p}}s$ we have that
\begin{align*}T(c,\lambda,p) & =4\,c^{2-p}\int_{0}^1(1+\lambda^{-1})^{\frac{1}{p}}[(1+\lambda)(1-\,s^p)]^{\frac{1-p}{p}}\dif s=4\,c^{2-p}\l^{-\frac{1}{p}}(1+\l)^{\frac{2}{p}-1}\int_{0}^1(1-\,s^p)^{\frac{1-p}{p}}\dif s \\ & =4\,c^{2-p}\l^{-\frac{1}{p}}(1+\l)^{\frac{2}{p}-1}\frac{\Gamma\(\frac{1}{p}\)^2}{p\,\Gamma\(\frac{2}{p}\)} \end{align*}
$T$ is increasing on $|c|$ if $p\in(1,2)$ and decreasing on $|c|$ if $p>2$ and independent of $|c|$ if $p=2$.\par If we take $\lambda=1$,
$$T(c,1,p)=2^{\frac{2}{p}+1}\,c^{2-p}\frac{\Gamma\(\frac{1}{p}\)^2}{p\,\Gamma\(\frac{2}{p}\)}.$$
In particular, $T(c,1,2)=2\pi$ (independently of $c$).\par
We can also consider the dependence of $T$ on $\lambda$. We do this study for this particular example and in the following section we develop a general theory.
$$\frac{\partial T}{\partial \lambda}(c,\lambda,p)=\frac{4\,c^{2-p}}{p \lambda } \left(1+\frac{1}{\lambda }\right)^{-\frac{1}{p}} (1+\lambda )^{\frac{1-2p}{p}} (1+(p-1) \lambda )\int_{0}^1\left(1-s^p\right)^{\frac{1-p}{p}}\dif s>0.$$
Therefore the period $T$ is increasing on $\l$.
\end{exa}
\begin{rem}\label{remhomomorphism}
If a continuous function $f$ satisfies that $f(rt)=h(r)f(t)$, we can obtain the explicit expression of $f$. Let $c=f(1)$,  $g(t):=f(t)/f(1)$ and $\a=\ln g(e)$. Then $g(t\,s)=g(t)g(s)$. Also, for $t\ne 0$, $1=g(1)=g(t/t)=g(t)g(1/t)$ and therefore $g(t^{-1})=g(t)^{-1}$. If $n\in\bN$, $g(t^n)=g(t)^n$, so, for $t\ge0$, $g(t)=g(t^{\frac{n}{n}})=g(t^{\frac{1}{n}})^n$ and $g(t^{\frac{1}{n}})=g(t)^\frac{1}{n}$. Hence, $g(t^\frac{p}{q})=g(t)^\frac{p}{q}$ for every $p,q\in\bN$, $q\ne 0$ and, by the density of $\bQ$ in $\bR$ and the continuity of $f$, $g(t^r)=g(t)^r$ for all $t\ge0$, $r\in\bR^+$.\par
Now, for $t>0$, $g(t)=g(e^{\ln t})=g(e)^{\ln t}=e^{\ln g(e)\ln t}=t^{\ln g(e)}=t^\a$. Hence, $f(t)=\b\,t^\a$ for $t\ge0$. On the other hand, $1=g(1)=g((-1)^2)$, so $g(-1)=\pm 1$. Also, $f(-t)=g(-1)f(t)$ and thus, $f(-t)=\pm \b\,t^\a$ for $t>0$. In summary,
$$f(t)=\begin{cases} \b\,t^\a & \text{if}\ t\ge0,\\  \pm \b\,(-t)^\a & \text{if}\ t<0.\end{cases}$$
If we further ask for $f$ to be injective, $f(t)=\beta|t|^{\a-1}t$, that is, $f$ is an $\a$-laplacian.
\end{rem}
\subsection{Dependence of T on $\l$ and $c$}
Based on the approach used in Example \ref{exaplap}, we study now the dependence of $T$ on $\l$ and $c$ in a general way.\par
We continue to assume the hypotheses for \eqref{eqpergpar} and further assume that $f$ is a differentiable function. Let us divide the interval of integration in equation $\eqref{pfor}$ in $[F_-^{-1}((1+\lambda^{-1})F(c)),0]$ and $[0,F_+^{-1}((1+\lambda^{-1})F(c))]$. Observe that $F$ is injective restricted to any of the two intervals. For the nonnegative interval, taking the change of variables
$$r=F_+^{-1}\((1+\l^{-1})F(c\,s)\),$$
we have that
\begin{align*}& \int_{0}^{F_+^{-1}((1+\lambda^{-1})F(c))}\[\frac{1}{f(F_+^{-1}((1+\lambda)F(c)-\lambda\,F(r)))}-\frac{1}{f(F_-^{-1}((1+\lambda)F(c)-\lambda\,F(r)))}\]\dif r\\ = &
\int_{0}^{1}\[\frac{1}{f(F_+^{-1}((1+\lambda)[F(c)-\,F(c\,s)])}-\frac{1}{f(F_-^{-1}((1+\lambda)[F(c)-\,F(c\,s)])}\]\frac{[1+\l^{-1}]c\,f(c\,s)}{f\(F_+^{-1}\((1+\l^{-1})F(c\,s)\)\)}\dif s.
\end{align*}
All the same, with the change of variables
$$r=F_-^{-1}\((1+\l^{-1})F(c\,s)\),$$
\begin{align*}& \int_{F_-^{-1}((1+\lambda^{-1})F(c))}^{0}\[\frac{1}{f(F_+^{-1}((1+\lambda)F(c)-\lambda\,F(r)))}-\frac{1}{f(F_-^{-1}((1+\lambda)F(c)-\lambda\,F(r)))}\]\dif r\\ = &
\int_{1}^{0}\[\frac{1}{f(F_+^{-1}((1+\lambda)[F(c)-\,F(c\,s)])}-\frac{1}{f(F_-^{-1}((1+\lambda)[F(c)-\,F(c\,s)])}\]\frac{[1+\l^{-1}]c\,f(c\,s)}{f\(F_-^{-1}\((1+\l^{-1})F(c\,s)\)\)}\dif s.
\end{align*}
Now let, for $\l\in\bR^+$, $s\in[0,1]$, $c\ne0$
\begin{align*}
\a(\l,s,c)  :&=(1+\l^{-1})c\,f(c\,s),\quad\frac{\partial\a}{\partial\l}(\l,s,c)=-\l^{-2}c\,f(c\,s),\\
\b_\pm(\l,s,c)  :&=f\(F_\pm^{-1}\((1+\l^{-1})F(c\,s)\)\), \\ \frac{\partial\b_\pm}{\partial\l}(\l,s,c) & =-\l^{-2}F(c\,s)\frac{f'\(F_\pm^{-1}\((1+\l^{-1})F(c\,s)\)\)}{f\(F_\pm^{-1}\((1+\l^{-1})F(c\,s)\)\)},\\
\c_\pm(\l,s,c)  :&=f(F_\pm^{-1}((1+\lambda)[F(c)-\,F(c\,s)])), \\ \frac{\partial\c_\pm}{\partial\l}(\l,s,c) & =[F(c)-\,F(c\,s)]\frac{f'\(F_\pm^{-1}((1+\lambda)[F(c)-\,F(c\,s)])\)}{f\(F_\pm^{-1}((1+\lambda)[F(c)-\,F(c\,s)])\)}.
\end{align*}
Then
\begin{equation}\label{eqtlc}T(\l,c)=\int_0^1\a(\l,s,c)\[\frac{1}{\b_+(\l,s,c)}-\frac{1}{\b_-(\l,s,c)}\]\[\frac{1}{\c_+(\l,s,c)}-\frac{1}{\c_-(\l,s,c)}\]\dif s.\end{equation}
Therefore,
\begin{align*}\frac{\partial T}{\partial\l}(\l,c)  = & \int_0^1\left\{\frac{\partial \a}{\partial\l}(\l,s,c)\[\frac{1}{\b_+(\l,s,c)}-\frac{1}{\b_-(\l,s,c)}\]\[\frac{1}{\c_+(\l,s,c)}-\frac{1}{\c_-(\l,s,c)}\]+\right.\\ & \a(\l,s,c)\[\frac{\frac{\partial\b_-}{\partial\l}(\l,s,c)}{\b_-(\l,s,c)^2}-\frac{\frac{\partial\b_+}{\partial\l}(\l,s,c)}{\b_+(\l,s,c)^2}\]\[\frac{1}{\c_+(\l,s,c)}-\frac{1}{\c_-(\l,s,c)}\]+\\ & \left.\a(\l,s,c)\[\frac{1}{\b_+(\l,s,c)}-\frac{1}{\b_-(\l,s,c)}\]\[\frac{\frac{\partial\c_-}{\partial\l}(\l,s,c)}{\c_-(\l,s,c)^2}-\frac{\frac{\partial\c_+}{\partial\l}(\l,s,c)}{\c_+(\l,s,c)^2}\]\right\}\dif s.\end{align*}
Observe that $\a$, $f|_{[0,1]}$, $f'$, $F$, $F^{-1}_+$, $\b_+$, $\frac{\partial\b_-}{\partial\l}$, $\c_+$, $\frac{\partial\c_+}{\partial\l}$  are non-negative, while $\frac{\partial \a}{\partial\l}$, $F^{-1}_-$, $\b_-$, $\frac{\partial\b_+}{\partial\l}$, $\c_-$, $\frac{\partial\c_-}{\partial\l}$ are non-positive. In general we cannot tell the sign of $T(\l,c)$ from this expression, but making certain assumptions we can simplify it to derive information.\par
Assume now $f$ is and odd function. Then $F^{-1}_-=-F^{-1}_+$, $\b_-=-\b_+$ and $\c_-=-\c_+$, so
$$\frac{\partial T}{\partial\l}(\l,c)  = 4\int_0^1\frac{1}{\b_+(\l,s,c)\c_+(\l,s,c)}\[\frac{\partial \a}{\partial\l}(\l,s,c)- \a(\l,s,c)\(\frac{\frac{\partial\b_+}{\partial\l}(\l,s,c)}{\b_+(\l,s,c)}+\frac{\frac{\partial\c_+}{\partial\l}(\l,s,c)}{\c_+(\l,s,c)}\)\]\dif s.$$
Now, if we differentiate equation \eqref{eqtlc} with respect to $c$,
\begin{align*}\frac{\partial T}{\partial c}(\l,c)  = & \int_0^1\left\{\frac{\partial \a}{\partial c}(\l,s,c)\[\frac{1}{\b_+(\l,s,c)}-\frac{1}{\b_-(\l,s,c)}\]\[\frac{1}{\c_+(\l,s,c)}-\frac{1}{\c_-(\l,s,c)}\]+\right.\\ & \a(\l,s,c)\[\frac{\frac{\partial\b_-}{\partial c}(\l,s,c)}{\b_-(\l,s,c)^2}-\frac{\frac{\partial\b_+}{\partial c}(\l,s,c)}{\b_+(\l,s,c)^2}\]\[\frac{1}{\c_+(\l,s,c)}-\frac{1}{\c_-(\l,s,c)}\]+\\ & \left.\a(\l,s,c)\[\frac{1}{\b_+(\l,s,c)}-\frac{1}{\b_-(\l,s,c)}\]\[\frac{\frac{\partial\c_-}{\partial c}(\l,s,c)}{\c_-(\l,s,c)^2}-\frac{\frac{\partial\c_+}{\partial c}(\l,s,c)}{\c_+(\l,s,c)^2}\]\right\}\dif s.\end{align*}
Observe that
\begin{align*}
\frac{\partial\a}{\partial c}(\l,s,c) & =(1+\l^{-1})\[f(c\,s)+c\,s\,f'(c\,s)\],\\
\frac{\partial\b_\pm}{\partial c}(\l,s,c) & =(1+\l^{-1})\,s\,f(c\,s)\frac{f'\(F_\pm^{-1}\((1+\l^{-1})F(c\,s)\)\)}{f\(F_\pm^{-1}\((1+\l^{-1})F(c\,s)\)\)},\\
 \frac{\partial\c_\pm}{\partial c}(\l,s,c) & =(1+\lambda)[f(c)-s\,f(c\,s)]\frac{f'\(F_\pm^{-1}((1+\lambda)[F(c)-\,F(c\,s)])\)}{f\(F_\pm^{-1}((1+\lambda)[F(c)-\,F(c\,s)])\)}.
\end{align*}
Hence, $\frac{\partial\a}{\partial c}$, $\frac{\partial\b_+}{\partial c}$ is positive and $\frac{\partial\b_-}{\partial c}$ negative for $c\ge0$. Assume now $f$ is an odd function.
$$\frac{\partial T}{\partial c}(\l,c)  = 4\int_0^1\frac{1}{\b_+(\l,s,c)\c_+(\l,s,c)}\[\frac{\partial \a}{\partial c}(\l,s,c)- \a(\l,s,c)\(\frac{\frac{\partial\b_+}{\partial c}(\l,s,c)}{\b_+(\l,s,c)}+\frac{\frac{\partial\c_+}{\partial c}(\l,s,c)}{\c_+(\l,s,c)}\)\]\dif s.$$
\begin{exa} Let $f:(-1,1)\to\bR$, $f(x):=x/\sqrt{1-x^2},\ x\in\bR$ and consider problem \eqref{eqpergpar}\footnote{The diffeomorphism $f$ in this example has been widely studied by Bereanu and Mawhin (see, for instance, \cite{Ber}) and is known as the mean curvature operator of the Minkowski space. Its inverse, the mean curvature operator of the Euclidean space, also studied in \cite{Ber}, appears in Example \ref{examcoe}.}.
Then
$$F(x)=1-\sqrt{1-x^2},\ F_+^{-1}(x)=\sqrt{2x-x^2}.$$
In order for the conditions in Corollary \ref{thmexc} to be satisfied we need
$$(1+\l)F(c)<1,\quad (1+\l^{-1})F(c)<1,$$
that is
$$c<\min\left\{\frac{\sqrt{\l(\l+2)}}{\l+1},\frac{\sqrt{2\l+1}}{\l+1}\right\}.$$
In Figure \ref{figure2g} we plot how the period varies as a function of $c$ and $\l$. Observe how the period is decreasing in both parameters and $\lim_{c,\l\to 0}T(\l,c)=+\infty$.
 \begin{figure}[hhht]
  \center{\includegraphics[width=.5\textwidth]{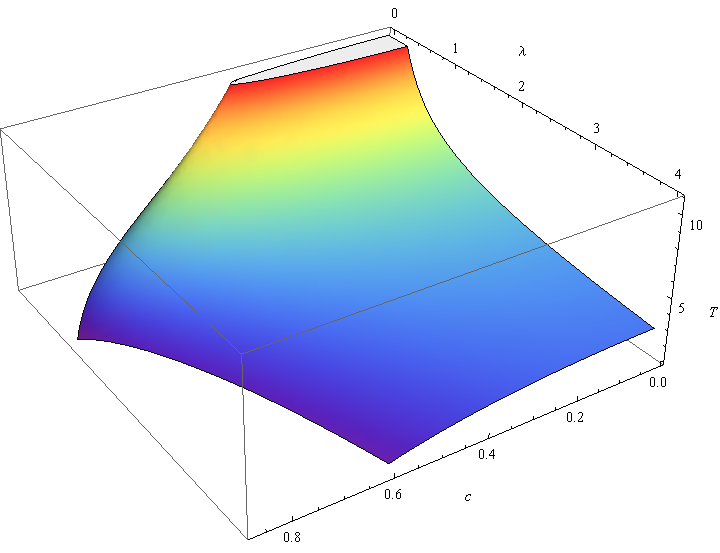}}\caption{Graph of the period $T$ function of $c$ and $\l$.}\label{figure2g}
  \end{figure}
\end{exa}
\begin{exa}\label{examcoe} Let $f:\bR\to(-1,1)$, $f(x):=x/\sqrt{1+x^2},\ x\in\bR$ and consider problem \eqref{eqpergpar}.  $f$ is effectively the inverse function of the one in the previous example.
Then
$$F(x)=\sqrt{1+x^2}-1,\ F_+^{-1}(x)=\sqrt{2x+x^2}.$$
The conditions in Corollary \ref{thmexc} are satisfied without any further restrictions.
In Figure \ref{figure2f} we plot how the period varies as a function of $c$ and $\l$. Observe in this plot how the period is decreasing in $\l$, increasing in $c$ and $\lim_{\l\to 0}T(c,\l)=\lim_{c\to +\infty}T(c,\l)=+\infty$.
\begin{figure}[h!]
 \center{\includegraphics[width=.5\textwidth]{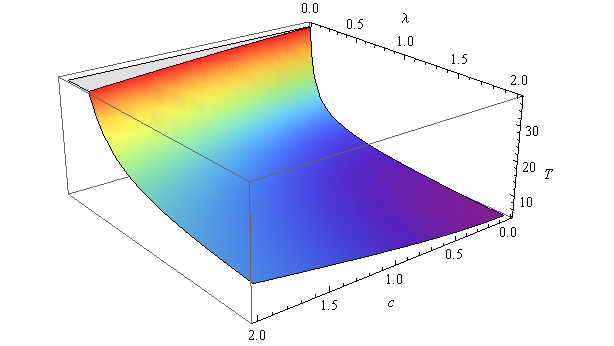}}\caption{Graph of the period $T$ function of $c$ and $\l$.}\label{figure2f}
\end{figure}
\end{exa}

\section{Problems with reflection}
Let us consider again the problem that motivated this paper in the Introduction, the obtaining of solutions of problem $\eqref{eqinvb}$ in the case $\phi(t)=-t$. Hence, consider all of the problems \eqref{eqinv1}--\eqref{eqperbis1} in the case $\phi(t)=-t$.\par
Observe that Lemma \ref{lem1b} (following Remark \ref{remlemex}) can be trivially extended to the following lemma.
\begin{lem}\label{lem1}
Let $f:(\tau_1,\tau_2)\to(\sigma_1,\sigma_2)$ an locally Lipchitz a.\,c. function with a.\,c. inverse. Then $x$ is a solution of the first order differential equation with involution
(\ref{eqinvb}) if and only if $x$ is a solution of the second order ordinary differential equation
(\ref{odeb}).\end{lem}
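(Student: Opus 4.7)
The plan is to reduce this to Lemma \ref{lem2} specialised to the involution $\phi(t)=-t$. The map $t\mapsto -t$ is a $\cC^1$ involution on $\bR$, and the condition $\phi([a,b])=[a,b]$ from Lemma \ref{lem2} forces $b=-a$ (the only non-trivial case). Thus the geometric hypotheses of Lemma \ref{lem2} are automatically in force as soon as we work on a symmetric interval; the only genuine issue is the regularity of $f$, which here is weaker than ``diffeomorphism''.

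Remark \ref{remlemex} already announces that the arguments for Lemmas \ref{lem1b} and \ref{lem2} remain valid under the weakened hypothesis that $f$ and $f^{-1}$ are absolutely continuous and $f$ is locally Lipschitz, so the proof reduces to checking that each manipulation used in the original proof of Lemma \ref{lem2} still makes sense a.e. For the forward direction, starting from $x'(t)=f(x(-t))$ with $x$ absolutely continuous: the composition $t\mapsto x(-t)$ is absolutely continuous, and post-composing with the locally Lipschitz $f$ keeps absolute continuity, so $x'$ is absolutely continuous and hence differentiable a.e.; the chain rule then yields $x''(t)=-f'(x(-t))\,x'(-t)=-f'(x(-t))f(x(t))$ a.e., and inverting the identity $x'(t)=f(x(-t))$ (permitted because $f$ is invertible) rewrites $x(-t)$ as $f^{-1}(x'(t))$ a.e., producing \eqref{odeb} with $\phi'\equiv -1$. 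The boundary conditions $x(a)=x(b)$ and $x(a)=f^{-1}(x'(a))$ follow by evaluating at $t=a$ and using $-a=b$.

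For the converse, assume $x$ solves \eqref{odeb}. The coefficient $f'(f^{-1}(x'(t)))$ is the a.e. derivative of the absolutely continuous map $f\circ f^{-1}\circ x' = x'$ viewed through the chain rule, so \eqref{odeb} can be re-written as $(f^{-1}\circ x')'(t)=f(x(t))\phi'(t)=-f(x(t))$ a.e., exactly as indicated in the passage preceding \eqref{eqperbis1}. Integrating from $a$ to $t$, using $f^{-1}(x'(a))=x(a)$, and the reflection symmetry $\int_a^t f(x(s))\dif s = -\int_{-a}^{-t}f(x(-u))\dif u$ (valid by absolute continuity of $x$), produces $f^{-1}(x'(t))=x(-t)$ a.e.; applying $f$ gives \eqref{eqinvb}, and the integrated periodic condition $x(a)=x(b)$ is inherited from the boundary data of \eqref{odeb}.

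The only delicate point, and the one the remark is implicitly pointing at, is the bookkeeping of absolute continuity: one must use that absolute continuity is preserved under composition with a locally Lipschitz map (to handle $f\circ x$) and that $f^{-1}$ being absolutely continuous allows the substitution $x(-t)=f^{-1}(x'(t))$ on sets of positive measure without losing any a.e.\ identities. Once this is in hand, everything in Lemma \ref{lem2} goes through verbatim with $\phi'\equiv -1$, and the statement follows.
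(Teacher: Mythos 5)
Your overall strategy --- specialise Lemma \ref{lem2} to the involution $\phi(t)=-t$ on a symmetric interval $[a,-a]$ and invoke Remark \ref{remlemex} to cover the weakened regularity --- is precisely what the paper intends: the text offers nothing beyond the remark that the earlier lemma ``can be trivially extended''. Your forward direction is also essentially correct: since $f$ is locally Lipschitz and $t\mapsto x(-t)$ is absolutely continuous, $x'=f(x(-\cdot))$ is absolutely continuous, the a.e.\ chain rule applies, and substituting $x(-t)=f^{-1}(x'(t))$ and $x'(-t)=f(x(t))$ produces \eqref{odeb} together with its boundary data.

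The converse, as you have written it, is circular. Integrating $(f^{-1}\circ x')'(t)=-f(x(t))$ from $a$ to $t$ and using $f^{-1}(x'(a))=x(a)$ gives $f^{-1}(x'(t))=x(a)-\int_a^t f(x(s))\,\dif s$, whereas $x(-t)=x(-a)-\int_{-t}^{-a}x'(v)\,\dif v$ with $x(-a)=x(b)=x(a)$; after your change of variables the two right-hand sides coincide if and only if $f(x(s))=x'(-s)$ for a.e.\ $s$, which is exactly the identity \eqref{eqinvb} you are trying to prove. So the ``reflection symmetry'' of the integral does not \emph{produce} $f^{-1}(x'(t))=x(-t)$; it merely restates the goal. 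Closing this direction requires an actual identification argument: one must show that the reflected function $y(t):=f^{-1}(x'(-t))$ solves the same first-order system $u'=f(v)$, $v'=-f(u)$ as the pair $(x,f^{-1}\circ x')$ and then appeal to uniqueness for that system (this is where the local Lipschitz hypothesis on $f$, emphasised in Remark \ref{remlemex}, is genuinely used), as in the proof of Lemma 2.2 of \cite{Cab4}. If you intend to rest the converse entirely on the citation, say so and drop the integration step; if you intend to reprove it, the uniqueness step is the missing ingredient.
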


As was shown in Section 1, problem \eqref{odeb} is equivalent to problem \eqref{eqperbis1}.
We can now state the following corollary of Theorem \ref{thmex} regarding the periodicity of problem \eqref{eqinvb} as foreseen in Section 1.
\begin{cor}\label{thmper2} Let $f:(\tau_1,\tau_2)\to(\sigma_1,\sigma_2)$ an increasing locally Lipchitz a.\,c. function with a.\,c. inverse such that $0\in(\tau_1,\tau_2)$,  $f(0)=0$ and ${c}>0$. Assume $2F({c})<\min\{F(\tau_1),F(\tau_2)\}$. Then, if $x_{c}(t)$ is a solution of problem \eqref{eqper3} and we assume there exist $\ol {c}_1,\ol {c}_2\in\bR$, $\ol {c}_1<\ol {c}_2$, such that $2\max\{F(\ol c_1),F(\ol c_2)\}<\min\{F(\tau_1),F(\tau_2)\}$ and $(x_{\ol {c}_1}(b)-\ol {c}_1)(x_{\ol {c}_2}(b)-\ol {c}_2)<0$, then problem \eqref{eqinvb} must have at least a solution.
\end{cor}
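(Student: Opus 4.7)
The strategy is to use the equivalence between the first order reflection problem and the second order problem \eqref{eqper3}, combined with a shooting argument using Bolzano's theorem on the parameter $c$.

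First, I would note that with $\phi(t)=-t$, we have $\phi'(t)=-1$, so problem \eqref{eqperbis1} becomes exactly
$$(f^{-1}\circ x')'(t)+f(x(t))=0,\quad x(a)=x(b),\quad x'(a)=f(x(a)),$$
i.e.\ problem \eqref{eqper3} (with initial value $c=x(a)$) together with the extra endpoint condition $x(b)=x(a)$. By Lemma \ref{lem1} (applicable because $f$ and $f^{-1}$ are absolutely continuous and $f$ is locally Lipschitz, as in Remark \ref{remlemex}), any solution of this boundary value problem corresponds to a solution of \eqref{eqinvb}. Therefore it suffices to find $c$ in the admissible parameter range for which $x_c(b)=c$.

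Second, I would invoke Corollary \ref{thmexc} with $\lambda=1$: the hypothesis $2F(\bar c_i)<\min\{F(\tau_1),F(\tau_2)\}$ for $i=1,2$ is exactly the condition $(1+\lambda^{-1})F(c)<\min\{F(\tau_1),F(\tau_2)\}$ needed to guarantee that $x_{\bar c_1}$ and $x_{\bar c_2}$ are defined as global periodic solutions of \eqref{eqper3}. In fact, for every $c$ in the closed interval $[\bar c_1,\bar c_2]$ one has $2F(c)\le 2\max\{F(\bar c_1),F(\bar c_2)\}<\min\{F(\tau_1),F(\tau_2)\}$ (using monotonicity of $F_-$ and $F_+$), so $x_c$ is well defined on all of $\mathbb{R}$ for the entire range $c\in[\bar c_1,\bar c_2]$.

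Third, define $\Psi:[\bar c_1,\bar c_2]\to\bR$ by $\Psi(c):=x_c(b)-c$. I would establish continuity of $\Psi$ by invoking continuous dependence on initial conditions for the equivalent first-order system
$$x'=f^{-1}(y),\quad y'=-f(x),\quad x(a)=c,\ y(a)=f(c),$$
whose right-hand side is continuous and locally Lipschitz in $(x,y)$ (since $f$ is locally Lipschitz and $f^{-1}$ is continuous, indeed absolutely continuous). On the compact interval $[a,b]$ all solutions $x_c$ with $c\in[\bar c_1,\bar c_2]$ remain in a fixed compact set (by the global existence already guaranteed and the energy identity $F(x_c(t))+G(y_c(t))=2F(c)$), so standard continuous dependence applies and $c\mapsto x_c(b)$ is continuous.

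Finally, the hypothesis $(x_{\bar c_1}(b)-\bar c_1)(x_{\bar c_2}(b)-\bar c_2)<0$ says $\Psi(\bar c_1)\Psi(\bar c_2)<0$, so by Bolzano's theorem there exists $c^*\in(\bar c_1,\bar c_2)$ with $\Psi(c^*)=0$, i.e.\ $x_{c^*}(b)=c^*=x_{c^*}(a)$. Together with the built-in condition $x_{c^*}'(a)=f(c^*)=f(x_{c^*}(a))$, this shows $x_{c^*}$ solves \eqref{eqperbis1}, hence by Lemma \ref{lem1} it yields a solution of \eqref{eqinvb}. The main technical obstacle is the continuous dependence step: one must check that no solution $x_c$ with $c\in[\bar c_1,\bar c_2]$ escapes the domain of $f$ on $[a,b]$, but this is immediate from the conservation law and the choice of $\bar c_1,\bar c_2$.
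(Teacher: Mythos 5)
Your proposal is correct and follows essentially the same route the paper intends: the corollary is stated without a written proof, the argument being exactly the shooting/Bolzano scheme sketched in the Introduction (continuity of $c\mapsto x_c(b)$ plus $(x_{\ol c_1}(b)-\ol c_1)(x_{\ol c_2}(b)-\ol c_2)<0$) combined with the equivalence of \eqref{eqperbis1} and \eqref{eqinvb} via Lemma \ref{lem1}, and with Corollary \ref{thmexc} at $\l=1$ guaranteeing global solutions for all $c\in[\ol c_1,\ol c_2]$. One small caveat: an a.\,c.\ inverse does not make the field $(f^{-1}(y),-f(x))$ locally Lipschitz (e.g.\ $f(x)=x^3$), but continuous dependence still follows from uniqueness, which Corollary \ref{thmexc} already provides.
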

We now give an example in which there is no need to find $\ol {c}_1,\ol {c}_2\in\bR$ in the conditions of Corollary \ref{thmper2} because the function determining the period has a simple inverse.
\begin{exa}
Take again $f(t):=|t|^{p-2}t$, $p>1,\ c>0$ and consider the problem
\begin{equation}\label{eqinvplap} x'(t)=|x(-t)|^{p-2}x(-t),\ t\in\bR,\ x(0)=c.\end{equation}
By Theorem \ref{thmper2} we have that the solutions of are periodic for every $c\ne 0$ and
$$T(c,1,p)=2^{\frac{2}{p}+1}\,c^{2-p}\frac{\Gamma\(\frac{1}{p}\)^2}{p\,\Gamma\(\frac{2}{p}\)}.$$
Consider now the problem 
\begin{equation}\label{eqinvplapper}x'(t)=|x(-t)|^{p-2}x(-t),\ t\in\bR,\ x(a)=x(b).\end{equation}
There is a unique solution for problem \eqref{eqinvplapper} for $p\in(0,2)\cup(2,+\infty)$. Just take the unique solution of problem \eqref{eqinvplap} with
$$c=\(\frac{b-a}{2^{\frac{2}{p}+1}}p\frac{\Gamma\(\frac{2}{p}\)}{\Gamma\(\frac{1}{p}\)^2}\)^\frac{1}{2-p}.$$
\end{exa}


\begin{thebibliography}{99}
\bibitem{Aga} R. P. Agarwal, D. O'Regan, S. Stan\v ek. \textit{General existence principles for nonlocal boundary value problems with $\phi$-Laplacian and their applications}.  Abstr. Appl. Anal. (2006) Art. ID 96826.
\bibitem{Ber} C. Bereanu, P. Jebelean, J. Mawhin. Radial solutions for some nonlinear problems involving mean curvature operators in euclidean and Minkowski spaces. Proc. Amer. Math. Soc. 137 (2009), 161--169.
\bibitem{Bha} B. A. Bhayo, M. Vuorinen.\textit{ On generalized trigonometric functions with two parameters}.  J. Approx. Theory 164 (2012) 1415--1426.
\bibitem{Bus} P. J. Bushell, D. E. Edmunds. \textit{Remarks on Generalized Trigonometric Functions}. Rocky Mountain J. Math. 42, 1 (2012) 25--57.
\bibitem{Bin} P. A. Binding, P. Dr\'abek, Y. X. Huang.\textit{ The range of the $p$-Laplacian}. Appl. Math. Lett. Vol. 10, 6, (1997) 77--82.
\bibitem{BogDos} G. Bogn\'ar, O. Do\v sl\'y. \textit{The Ratio of Eigenvalues of the Dirichlet Eigenvalue Problem for Equations with One-Dimensional $p$-Laplacian}. Abstr. Appl. Anal. Vol. 2010 (2010), Article ID 123975.
\bibitem{CabCid} A. Cabada, J. A. Cid, M. Tvrd\'y. \textit{A generalized anti-maximum principle for the periodic one-dimensional $p$-Laplacian with sign-changing potential}. Nonl. Anal. Vol. 72 (2010), 3436--34446.
\bibitem{CabLom} A. Cabada, A. Lomtatidze, M. Tvrd\'y. \textit{Periodic Problem Involving Quasilinear Differential Operator and Weak Singularity}.  Adv. Nonlinear Stud. Vol. 7 (2007), 629--649.
\bibitem{CabSta} A. Cabada, S. Stan\v ek. \textit{Functional fractional boundary value problems with singular $\phi$-Laplacian}. Appl. Math. Comp. 219 (2012) 1383--1390.
\bibitem{Cab4}  A. Cabada, A., F. A. F. Tojo,  \textit{Comparison results for first order linear operators with reflection and periodic boundary value conditions}.  Nonlinear Anal. Vol. 78, (2013), 32--46.
\bibitem{CabToj}---, \textit{Existence results for a linear equation with reflection, non-constant coefficient and periodic boundary conditions}.  J. Math. Anal. Appl. Vol. 412, 1, (2014), 529–546.
\bibitem{CabToj2}---, \textit{Solutions of the first order linear equation with reflection and general linear conditions}. Global J. Math. Sci. Vol. 2-1 (2013) 1--8.
\bibitem{Cec} M. Cecchi, Z. Do\v sl\'a, M. Marini. \textit{Oscillation of a class of differential equations with generalized phi-Laplacian}.  Proc. Roy. Soc. Edinburgh, 143A, (2013) 493-–506.

\bibitem{Chv} M. Chvátal, O. Do\v sl\'y. \textit{Variational method and conjugacy criteria for half-linear differential equations}.  Arch. Math. (Brno). Vol. 49 (2013), 9--16.
\bibitem{DosOz} O. Do\v sl\'y, A. \"Ozbekler, R. \v Simon Hischer. \textit{Oscillation criterion for half-linear differential equations with periodic coefficients}. J. Math. Anal. Appl. 393 (2012) 360--366.
\bibitem{Dra4} P. Dr\'abek \textit{On the Variational Eigenvalues Which Are Not of Ljusternik-Schnirelmann Type}. Abst. and Appl. Anal. Vol 2012 (2012), Article ID 434631.
\bibitem{Dra} P. Dr\'abek, R. Man\'asevich. \textit{On the closed solution to some nonhomogeneous eigenvalue problems with $p$-Laplacian}. Diff. and Int. Eq. Vol. 12, 6 (1999), 773--926.
\bibitem{Dra2} P. Dr\'abek, P. Girg, R. Man\'asevich. \textit{Generic Fredholm alternative-type results for the one dimensional $p$-Laplacian}. Nonlinear differ. equ. appl. 8, (2001) 285--298.
\bibitem{Dra3} P. Dr\'abek, M. Garc\'ia--Huidobro, R. Man\'asevich. \textit{Positive solutions for a class of equations with a $p$-Laplace like operator and weights}. Nonl. Anal. 71, (2009) 1281--1300.
\bibitem{Edm} D. E. Edmunds, P. Gurka, J. Lang. \textit{Properties of generalized trigonometric functions}.  J. Approx. Theory 164 (2012) 47--56.
\bibitem{Hei} S. Heikkil\"a, S. Seikkala. \textit{On singular, functional, nonsmooth and implicit phi-Laplacian intial and boundry value problems}. J. Math. Anal. Appl. 308 (2005) 513--531.
\bibitem{Jia} W. D. Jiang, M. K. Wang, Y. M. Chu, Y. P. Jiang, f. Qi. \textit{Convexity of the generalized sine function and de generalized hyperbolic sine function}.  J. Approx. Theory 174 (2013) 1--9.
\bibitem{Kar} G. L. Karatostas. \textit{Solvability of the $\Phi$-Laplacian with nonlocal boundary conditions}. Appl. Math. and Comp. 215 (2009) 514--523.
\bibitem{Kle} R. Kl\'en, M. Vuorinen, X. Zhang. \textit{Inequalities for the generalized trigonometric and hyperbolic functions}. J. Math. Anal. Appl. 409 (2014) 521--529.
\bibitem{Liu} C. S. Liu \textit{Developing an $SL(2,\bR)$ Lie-group shooting method for a singular
$\phi$-Laplacian in a nonlinear ODE}. Commun. Nonlinear Sci. Numer. Simulat. 18 (2013) 2327–-2339.
\bibitem{Ore} D. O'Regan, J. Per\'an. O\textit{ne dimensional $\phi$-Laplacian functional equations}. J. Math. Anal. Appl. 371 (2010) 177--183.
\bibitem{Pin} M. del Pino, P. Dr\'abek, R. Man\'asevich. \textit{The Fredholm Alternative at the First Eigenvalue for the One Dimensional $p$-Laplacian}. J. Diff. Eq. 151, (1999) 386--419.
\bibitem{Pin2} M. del Pino, M. Elgueta, R. Man\'asevich. \textit{A Homotopic Deformation along $p$ of a Leray-Schauder Degree Result and Existence for $(|u'|^{p-2}u')'+f(t,u)=0,\ u(0)=u(T)=0, p>1$}. J. Diff. Eq. 80, (1989) 1--13.
\bibitem{San} J. S\'anchez, V. Vergara. \textit{Exact number of solutions for a Neumann problem involving the $p$-Laplacian}. Electron. J. Differential Equations. Vol. 2014, 30 (2014) 1--10.
\bibitem{Tak} S. Takeuchi. \textit{Generalized Jacobian elliptic functions and their application to bifurcation problems associated with $p$-Laplacian}. J. Math. Anal. Appl. 385 (2012) 24--35.
\end{thebibliography}
\end{document}